\newtheorem{thm}{Theorem}[section]
\newtheorem{lem}{Lemma}
\newtheorem{algo}[thm]{Algorithm}
\newtheorem{prop}[thm]{Proposition}
\theoremstyle{definition}
\newtheorem{defi}{Definition}[section]
\newtheorem{rmk}[defi]{Remark}
\newcommand{\RR}{\mathbb{R}}
\newcommand{\NN}{\mathbb{N}}
\newcommand{\diag}{\mathrm{diag}}
\begin{document}
\begin{frontmatter}

\title{Sequential convergence of AdaGrad algorithm for smooth convex optimization}

\author[1]{Cheik Traor\'e\corref{cor1}}
\author[2]{Edouard Pauwels}
\address[1]{ENS Paris-Saclay and MaLGa, DIMA, University of Genova}
\address[2]{IRIT, CNRS, Université Toulouse 3 Paul Sabatier}


\cortext[cor1]{Corresponding author \\ \emph{E-mail address:} cheik.traore@ens-paris-saclay.fr}

\begin{abstract}
    We prove that the iterates produced by, either the scalar step size variant, or the coordinatewise variant of AdaGrad algorithm, are convergent sequences when applied to convex objective functions with Lipschitz gradient. The key insight is to remark that such AdaGrad sequences satisfy a variable metric quasi-Fej\'er monotonicity property, which allows to prove convergence.
\end{abstract}
\begin{keyword}
Convex optimization, adaptive algorithms, sequential convergence, Fej\'er monotonicity.
\end{keyword}

\end{frontmatter}

\section{Introduction}
We consider the problem of unconstrained minimization of a continuously differentiable convex function $F\colon \RR^n \mapsto \RR$ which gradient is globally Lipschitz. We will assume that the minimum of $F$ over $\RR^n$, $F^*$, is attained. We are interested in the sequential convergence of a largely used adaptive gradient method called AdaGrad.

\paragraph{Sequential convergence}
Continuous optimization algorithms are meant to converge if not to a global minimum at least to a local minimum of the cost function $F$, a necessary condition being, when the function is differentiable, Fermat rule, $\nabla F = 0$. 
Convergence of an iterative algorithm, producing a sequence of estimates in $\RR^n$, $(x_k)_{k \in \NN}$, can be measured in several ways: convergence of the norm of the gradients $\|\nabla F(x_k)\|_{k \in \NN}$, convergence of the suboptimality level $F(x_k) - F^*$, as $k$ grows to infinity. These measures of convergence do not translate directly into asymptotic convergence of the iterates $(x_k)_{k \in \NN}$ themselves. In general, this needs not be true, without additional assumptions. For example, when $F$ is strongly or strictly convex, since the minimum is uniquely attained, convergence of the gradient or the suboptimality level to $0$ implies convergence of the sequence.

Convergence of iterate sequences is an important measure of algorithmic stability. Indeed, in optimization applications (statistics \cite{bach2012optimization}, signal processing \cite{combettes2011proximal}) one may be concerned about the value of the argmin more than the minimum value. Sequential convergence ensures that the estimate of the argmin produced by the algorithm has some asymptotic stability property.

\paragraph{Adaptive gradient methods}
First order methods are the most widespread methods for machine learning and signal processing applications \cite{bottou2018optimization}. We will focus on  AdaGrad algorithm \cite{duchi2011adaptive}, which was initially developed in an online learning context, see also \cite{mcmahan2010adaptive}. This is a simple gradient method for which the step size is tuned automatically, in a coordinatewise fashion, based on previous gradient observations, this is where the term ``adaptive'' comes from. Interestingly, this adaptivity property found a large interest in training of deep networks \cite{goodfellow2016deep} with extensions and variants such as the widespread Adam algorithm \cite{DBLP:journals/corr/KingmaB14}. This success can be partially explained because adaptivity performs well in many applications, without requiring much manual tuning of step size decay. However this is not a consensus \cite{wilson2017marginal}.

Getting back to the convex world, it was suggested that adaptive step sizes give the possibility to use a single step size strategy, independent of the class of problem at hand: smooth versus nonsmooth, deterministic versus noisy \cite{levy2018online}. Indeed, it is known in convex optimization that constant step sizes can be used in the deterministic smooth case, while a decreasing schedule has to be used in the presence of nonsmoothness or noise. This idea was extended to adaptivity to strong convexity \cite{chen2018sadagrad} and its extensions \cite{xie2020linear}, as well as adaptivity in the context of variational inequalities \cite{bach2019universal}. 

In a more general nonconvex optimization context, there has been several recent attempts to develop a convergence theory for adaptive methods, with the application to deep network training in mind \cite{li2019convergence,malitsky2019adaptive,barakat2018convergence,ogaltsov2019adaptive,ward2019adagrad,defossez2020convergence,barakat2020convergence}. These provide qualitative convergence guaranties toward critical point or complexity estimates on the norm of the gradient, which are also, of course valid in the convex setting.

\paragraph{Fej\'er monotonicity and extensions}
In convex settings, the study of the convergence of the iterates of optimization algorithms has a long history. For many known first order algorithms, it turns out that the quantity $\|x_k - x^*\|_2^2$ is a Lyapunov function for the discrete dynamics for any solution $x^*$.  This property is called Fej\'er monotonicity, it allows to obtain convergence rates \cite{nesterov1998introductory} and also to prove convergence of iterate sequences in relation to Opial property. For example this property was used in \cite{combettes2000fejer}, to prove convergence of proximal point algorithm, forward-backward splitting method, Douglas-Rashford slitting method and more. 

One of the most important issues in studying AdaGrad is that it is not a descent algorithm as one is not guaranteed that a sufficient decrease condition would hold. Extension of Fej\'er monotonicity were proposed in order to handle such situations.  Quasi-Fej\'er monotonicity is the property of being Fej\'er monotone up to a summable error, its modern description was given in \cite{combettes2001quasi}. This can be used, to prove iterate convergence of algorithms such as block-iterative parallel algorithms, projected subgradient methods, stochastic subgradient method, perturbed optimization algorithms. Another issue related to AdaGrad is the fact that it induces a change of metric at each iteration, hence the notion of monotonicity which will be used is variable metric quasi-Fej\'er montonicity as introduced in \cite{combettes2013variable}.


\paragraph{Main result}

In this paper, we prove the sequential convergence of AdaGrad for smooth convex objectives. More precisely, we consider two versions of the algorithm, one with scalar step size on the one hand, and one with coordinatewise step size on the other hand. Both have been previously studied in the literature, but the second one is by far the most widely used in practice. Without a surprise the property of Fej\'er monotonicity is a central argument to prove this result. More precisely we show that sequences generated by AdaGrad are bounded (whenever the objective attains its minimum) and comply with variable metric quasi Fej\'er monotonicity, and our conclusion follows from the abstract analysis in \cite{combettes2013variable}. A crucial difference between our setting and the one described in \cite{duchi2011adaptive} is the fact that we do not require a bounded domain in Problem \eqref{eq:mainProblem}. Therefore, at least in the smooth convex case, our analysis shows that this assumption is not necessary.

\section{Technical preliminary}

\subsection{Notations}
In all this document we consider the set $\RR^n$ of real vectors of dimension $n$, $n \in \NN$. We denote by $x_i$ the $i^{th}$ component of the vector $x \in \RR^n$, with $i \in [1,2,\cdots,n]$. $\nabla F$ is the gradient of a differentiable function $F \colon \RR^n \rightarrow \RR$ and $\nabla_i F$ its $i^{th}$ component, corresponding to the $i$-th partial derivative. $\|\cdot\|$ is the Euclidean norm and $\langle \cdot, \cdot \rangle$ its associated scalar product. Let $(u_k)_{k \in \NN}$ a sequence in $\RR^n$. We denote by $u_{j,i}$ the $i^{th}$ component of the $j^{th}$ vector of the sequence $(u_k)_{k \in \NN}$. The diagonal matrix with entries of the vector $v \in \RR^n$ on its diagonal is represented by $\diag(v) \in \RR^{n \times n}$. We use the notation $\ell_{+}^{1}$ for the space of summable nonnegative sequences of real numbers. For a positive definite matrix $W \in \RR^{n \times n}$ we use the notation $\|d\|_W^2 = \left\langle Wd, d\right\rangle$ to denote the associated norm. We let $\succeq$ denote the partial order over symmetric matrices in $\RR^n$.


\subsection{Problem setting and assumptions}
\label{sec:assumptions}
Throughout this document we will consider the following unconstrained minimization problem 
\begin{align}
    \min_{x \in \RR^n} F(x)
    \label{eq:mainProblem}
\end{align}
where $F \colon \RR^n \mapsto \RR$ is differentiable and $n \in \NN$ is the ambient dimension. In addition, we assume that $F$ is convex and attains its minimum, that is, there exists $x^* \in \RR^n$ such that
\begin{align}
    \forall x \in \RR^n,\quad F(x) \geq F(x^*).
\end{align}
We finally assume that $F$ has as $L$-Lipschitz gradient, for some $L > 0$. More explicitly, $L$ is such that for any $x,y \in \RR^n$,
\begin{align}
    \|\nabla F(x) - \nabla F(y) \| \leq L\|x-y\|.
\end{align}
From this property, we can derive the classical Descent Lemma, which is a quantitative bound on the difference between $f$ and its first order Taylor expansion, see for example in \cite[Lemma 1.2.3]{nesterov1998introductory}.
\begin{lem}[Descent Lemma]\label{lem:desclemma} Suppose that $f \colon \RR^n \mapsto \RR$ has $L$-Lipschitz gradient. Then for all $x,y \in \RR^n$, we have
\begin{align}
    |f(y) - f(x) - \left\langle \nabla f(x), y - x \right\rangle| \leq \frac{L}{2} \|y - x\|^2.
\end{align}
\end{lem}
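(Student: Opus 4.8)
The plan is to reduce the statement to the one-dimensional fundamental theorem of calculus along the segment joining $x$ and $y$. First I would introduce the auxiliary function $\varphi\colon [0,1] \to \RR$ defined by $\varphi(t) = f\big(x + t(y-x)\big)$. Since $f$ is differentiable, $\varphi$ is differentiable on $[0,1]$ with $\varphi'(t) = \left\langle \nabla f\big(x + t(y-x)\big), y - x\right\rangle$, and this derivative is continuous in $t$ because $\nabla f$ is Lipschitz (hence continuous). The fundamental theorem of calculus then gives
\begin{align}
    f(y) - f(x) = \varphi(1) - \varphi(0) = \int_0^1 \left\langle \nabla f\big(x + t(y-x)\big), y - x\right\rangle \, dt .
\end{align}

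Next I would subtract the first-order term. Writing $\left\langle \nabla f(x), y-x\right\rangle = \int_0^1 \left\langle \nabla f(x), y-x\right\rangle \, dt$, the difference becomes
\begin{align}
    f(y) - f(x) - \left\langle \nabla f(x), y-x\right\rangle = \int_0^1 \left\langle \nabla f\big(x + t(y-x)\big) - \nabla f(x),\, y - x\right\rangle \, dt .
\end{align}
Taking absolute values, pulling the modulus inside the integral, and applying the Cauchy--Schwarz inequality pointwise in $t$ yields the bound $\int_0^1 \big\| \nabla f\big(x + t(y-x)\big) - \nabla f(x)\big\| \, \|y-x\| \, dt$.

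Finally I would invoke the $L$-Lipschitz assumption on $\nabla f$ to estimate $\big\| \nabla f\big(x + t(y-x)\big) - \nabla f(x)\big\| \leq L \| t(y-x)\| = Lt\|y-x\|$, so that the integral is at most $L\|y-x\|^2 \int_0^1 t \, dt = \frac{L}{2}\|y-x\|^2$, which is exactly the claimed inequality. I do not expect any real obstacle here: the only points requiring a word of care are the justification that $\varphi'$ is continuous so that the fundamental theorem of calculus applies (immediate from continuity of $\nabla f$), and the standard triangle inequality for integrals $\big|\int_0^1 h(t)\,dt\big| \le \int_0^1 |h(t)|\,dt$ used to move the absolute value inside.
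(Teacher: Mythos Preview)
Your argument is correct and is precisely the standard proof: represent $f(y)-f(x)$ via the fundamental theorem of calculus along the segment, subtract the linear term, apply Cauchy--Schwarz and the Lipschitz bound, then integrate $Lt\|y-x\|$ over $[0,1]$. The paper does not supply its own proof of this lemma but simply cites \cite[Lemma~1.2.3]{nesterov1998introductory}, where the same integral-representation argument is used, so your approach matches the intended one.
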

\subsection{Adaptive gradient algorithm (AdaGrad)}
\label{sec:adagrad}
We study two versions of AdaGrad, the original algorithm performing adaptive gradient steps at a coordinate level and a simplified version which uses a scalar step size. The latter variant, was coined as AdaGrad-Norm in \cite{ward2019adagrad}, it goes as follows:
\begin{algo}[AdaGrad-Norm]\label{algo:adagradSimple}
Given $x_0 \in \RR^n$, $v_0 = 0$, $\delta > 0$, iterate, for $k \in \NN$,
\begin{align}
    v_{k+1} &= v_k + \|\nabla F(x_k)\|^2 \nonumber \\
    x_{k+1} &= x_k - \frac{1}{\sqrt{v_{k+1} + \delta}} \nabla F(x_k) .
    \label{eq:adagradSimple}
\end{align}
\end{algo}
The original version presented in \cite{duchi2011adaptive} applies the same idea combined with coordinate-wise updates using partial derivatives. The algorithm is as follows.
\begin{algo}[AdaGrad]\label{algo:adagradSimplecoord}
Given $x_0 \in \RR^n$, $v_0 = 0$, $\delta > 0$, iterate, for $k \in \NN$ and $i \in [1, \cdots, n]$,
\begin{align}
    v_{k+1,i} &= v_{k,i} + \left(\nabla_i F(x_k)\right)^2 \nonumber \\
    x_{k+1,i} &= x_{k,i} - \frac{1}{\sqrt{v_{k+1,i} + \delta}}\nabla_i F(x_k),
    \label{eq:adagradSimplecoord}
\end{align}
\end{algo}
Our goal is, to prove that the sequences $(x_k)_{k \in \NN}$ generated by AdaGrad are convergent for both variants.
\begin{rmk}
In both algorithms, one could equivalently take $v_0 = \delta$ and run the recursions \eqref{eq:adagradSimple} and \eqref{eq:adagradSimplecoord} without $\delta$. However, we let $\delta$ appear explicitly to make clear that the denominator is non zero, enforced by the constraint $\delta > 0$. This constraint can be relaxed to $\delta \geq 0$ using the convention $\frac{0}{0} = 0$ while maintaining our sequential convergence. 
Indeed, if $\delta = 0$, for $k=0$, the denominator in \eqref{eq:adagradSimple} is equal to $0$ if and only if $\nabla F(x_0) = 0$ which is a very special case for which one can set $x_k=x_0$ for all $k$. A similar reasoning can be applied to the coordinatewise version \eqref{algo:adagradSimplecoord}, with a more tedious discussion about each coordinate. We stick to the strict positivity constraint for simplicity of exposition.
\end{rmk}

\section{Results}
Our main result is the following
\begin{thm}\label{thm:main}
    Let $F \colon \RR^n \mapsto \RR$ be convex with $L$-Lipschitz gradient and attain its minimum on $\RR^n$. Then any sequence $(x_k)_{k \in \NN}$ generated by AdaGrad-Norm (Algorithm \ref{algo:adagradSimple}) or AdaGrad (Algorithm \ref{algo:adagradSimplecoord}) converges to a global minimum of $F$ as $k$ groes to infinity. 
\end{thm}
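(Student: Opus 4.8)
The plan is to cast both algorithms into the variable metric quasi-Fej\'er framework of \cite{combettes2013variable}, for which convergence of $(x_k)$ to a point of the solution set $S := \{x \in \RR^n : F(x) = F^*\}$ follows as soon as one verifies that $(x_k)$ is variable metric quasi-Fej\'er monotone with respect to $S$ relative to the AdaGrad preconditioners, and that every cluster point of $(x_k)$ lies in $S$. I would write $g_k = \nabla F(x_k)$ and, for Algorithm \ref{algo:adagradSimplecoord}, $A_k = \diag\big(\sqrt{v_{k+1,i}+\delta}\big)_{i=1}^n$, so that the update reads $x_{k+1} = x_k - A_k^{-1}g_k$; I also set $b_{k,i} = \sqrt{v_{k+1,i}+\delta}$, which is nondecreasing in $k$ with $b_{k,i}^2 - b_{k-1,i}^2 = g_{k,i}^2$ (convention $b_{-1,i} = \sqrt\delta$). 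Algorithm \ref{algo:adagradSimple} is handled by the same argument with $A_k = \sqrt{v_{k+1}+\delta}\,\mathrm{Id}$ (equivalently, all $b_{k,i}$ replaced by the common value $\sqrt{\delta + \sum_{j\le k}\|\nabla F(x_j)\|^2}$), so I only detail the coordinatewise case.

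The first and hardest step is to establish $\sum_k \|g_k\|^2 < \infty$ \emph{without} any boundedness hypothesis on the iterates or the domain. I would apply the Descent Lemma (Lemma \ref{lem:desclemma}) to the update, getting $F(x_{k+1}) \le F(x_k) - \sum_i g_{k,i}^2/b_{k,i} + \tfrac{L}{2}\sum_i g_{k,i}^2/b_{k,i}^2$, then telescope and use $F \ge F^*$ to obtain $\sum_{k=0}^N\sum_i g_{k,i}^2/b_{k,i} \le F(x_0) - F^* + \tfrac{L}{2}\sum_{k=0}^N\sum_i g_{k,i}^2/b_{k,i}^2$. Both sums are controlled by telescoping in the $b_{k,i}$: the elementary inequality $g_{k,i}^2/b_{k,i}^2 = 1 - (b_{k-1,i}/b_{k,i})^2 \le 2(b_{k,i}-b_{k-1,i})/b_{k,i}$ gives $\sum_{k=0}^N g_{k,i}^2/b_{k,i}^2 \le 2\log(b_{N,i}/\sqrt\delta)$, while monotonicity of $b_{k,i}$ gives $\sum_{k=0}^N g_{k,i}^2/b_{k,i} \ge b_{N,i}^{-1}\sum_{k=0}^N g_{k,i}^2 = b_{N,i} - \delta/b_{N,i} \ge b_{N,i} - \sqrt\delta$. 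Combining these, $\sum_{i=1}^n\big(b_{N,i} - L\log b_{N,i}\big) \le C$ with $C$ independent of $N$; since $t \mapsto t - L\log t$ is bounded below and tends to $+\infty$ at infinity, each $(b_{N,i})_N$ is bounded, and because $b_{k,i}^2 = \delta + \sum_{j\le k} g_{j,i}^2$ this forces $\sum_k g_{k,i}^2 < \infty$ for every $i$, hence $\sum_k \|g_k\|^2 < \infty$. Consequently $g_k \to 0$, $v_{k,i} \uparrow v_{\infty,i} < \infty$, and $A_k \to A_\infty := \diag(\sqrt{v_{\infty,i}+\delta}) \succ 0$.

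With this estimate in hand the quasi-Fej\'er step is routine. I would attach to $x_k$ the metric $W_k := A_{k-1}$ (with $W_0 := \sqrt\delta\,\mathrm{Id}$), so that $x_{k+1} = x_k - W_{k+1}^{-1}g_k$; from $b_{k,i}/b_{k-1,i} = \sqrt{1 + g_{k,i}^2/(v_{k,i}+\delta)} \le 1 + g_{k,i}^2/(2\delta)$ one gets $W_{k+1} \preceq (1+\eta_k)W_k$ with $\eta_k := \|g_k\|^2/(2\delta) \in \ell_+^1$, while $\sqrt\delta\,\mathrm{Id} \preceq W_k$ and $\sup_k \|W_k\| < \infty$ by the previous step, so $(W_k)$ lies in the admissible class of \cite{combettes2013variable}. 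For $x^* \in S$, expanding $\|x_{k+1}-x^*\|_{A_k}^2 = \|x_k - x^* - A_k^{-1}g_k\|_{A_k}^2$ and using convexity in the form $\langle g_k, x_k - x^*\rangle \ge F(x_k) - F^* \ge 0$ gives $\|x_{k+1}-x^*\|_{A_k}^2 \le \|x_k-x^*\|_{A_k}^2 + \langle A_k^{-1}g_k, g_k\rangle$. Rewriting the left side as $\|x_{k+1}-x^*\|_{W_{k+1}}^2$, using $\|x_k-x^*\|_{A_k}^2 = \|x_k-x^*\|_{W_{k+1}}^2 \le (1+\eta_k)\|x_k-x^*\|_{W_k}^2$, and noting $\langle A_k^{-1}g_k, g_k\rangle \le \|g_k\|^2/\sqrt\delta =: \varepsilon_k \in \ell_+^1$, I obtain
\[
\|x_{k+1}-x^*\|_{W_{k+1}}^2 \le (1+\eta_k)\|x_k-x^*\|_{W_k}^2 + \varepsilon_k ,
\]
which is precisely variable metric quasi-Fej\'er monotonicity of $(x_k)$ with respect to $S$ relative to $(W_k)$.

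It remains to close the argument. The quasi-Fej\'er inequality together with $W_k \succeq \sqrt\delta\,\mathrm{Id}$ makes $(x_k)$ bounded, hence it has cluster points; and if $x_{k_j} \to \bar x$ then $\nabla F(\bar x) = \lim_j g_{k_j} = 0$ by continuity of $\nabla F$ and $g_k \to 0$, so $\bar x \in S$ by convexity. Having checked both hypotheses, the convergence theorem for variable metric quasi-Fej\'er monotone sequences of \cite{combettes2013variable} applies and yields convergence of $(x_k)$ to a point of $S$, which is the assertion (weak and strong convergence coincide in $\RR^n$, and $S$ is nonempty, closed and convex). The only genuinely delicate point in this plan is the global summability $\sum_k \|g_k\|^2 < \infty$, obtained from the self-regularizing bound $\sum_i (b_{N,i} - L\log b_{N,i}) \le C$; once it is available, everything downstream is a matter of matching the hypotheses of the abstract framework.
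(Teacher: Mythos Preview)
Your proof is correct, and the overall architecture (summability of $\|\nabla F(x_k)\|^2$, then variable metric quasi-Fej\'er monotonicity, then Theorem~\ref{thm:cnvgevarmet}) matches the paper. The quasi-Fej\'er step and the closing argument are essentially identical to what the paper does.

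The genuine difference is in how you obtain $\sum_k\|\nabla F(x_k)\|^2<\infty$. The paper (Lemmas~\ref{lem:lemmaconvergence} and~\ref{lem:lemmaconvergence2}) argues by cases: either $\sqrt{v_{k,i}+\delta}$ stays below $L$ forever, in which case the sum is trivially bounded, or it eventually exceeds $L$, in which case a sufficient decrease holds from that point on and the algebraic Lemma~\ref{lem:quadratic} converts $Z/\sqrt{Z+a}\le b$ into a bound on $Z$; in the coordinatewise case this split has to be done per coordinate and the ``bad'' coordinates contribute a fixed constant $C$. You avoid the case split entirely by telescoping the descent inequality and using the two elementary bounds $\sum_k g_{k,i}^2/b_{k,i}^2\le 2\log(b_{N,i}/\sqrt\delta)$ and $\sum_k g_{k,i}^2/b_{k,i}\ge b_{N,i}-\sqrt\delta$, which yields $\sum_i\big(b_{N,i}-L\log b_{N,i}\big)\le C$ and hence boundedness of each $b_{N,i}$ because $t\mapsto t-L\log t$ is coercive. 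This is cleaner and treats all coordinates uniformly, at the cost of a slightly less explicit bound than~\eqref{eq:equationboundfinaladagrad} and~\eqref{eq:equationboundfinaladagrad2}. A minor further difference: the paper handles AdaGrad-Norm with plain quasi-Fej\'er monotonicity in the Euclidean metric (the scalar preconditioner cancels), whereas you subsume it under the variable metric argument; both are fine.
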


The coming section is dedicated to exposition of the proof arguments for this result

\subsection{Variable metric quasi Fej\'er monotonicity}
The following definition is a simplification adapted from the more general exposition given in \cite{combettes2013variable}.
\begin{defi}
\label{def:variableMetric}
Let $\left(W_{k}\right)_{k \in \NN}$ be a sequence of symmetric matrices such that $W_k \succeq \alpha I_n, \forall k \in \NN$, for some $\alpha > 0$.
Let $C$ be a nonempty, closed and convex subset of $\RR^n,$ and let $\left(x_{k}\right)_{k \in \NN}$ be a sequence in $\RR^n$. Then $\left(x_{k}\right)_{k \in \NN}$ is variable metric quasi-Fej\'er monotone with respect to the target set $C$ relative to $\left(W_{k}\right)_{k \in \NN}$ if
\begin{align}
    &\left(\exists\left(\eta_{k}\right)_{k \in \NN} \in \ell_{+}^{1}(\NN)\right)(\forall z \in C)\left(\exists\left(\varepsilon_{k}\right)_{k \in \NN} \in \ell_{+}^{1}(\NN)\right)\nonumber \\
    &\left\|x_{k+1}-z\right\|^2_{W_{k+1}} \leqslant\left(1+\eta_{k}\right) \left\|x_{k}-z\right\|_{W_{k}}^2+\varepsilon_{k}, \quad (\forall k \in \NN). 
    \label{eq:variablemetrivfejer}
\end{align}
\end{defi}
For variable metric quasi-Fejér sequences the following proposition have already been established \cite[Proposition 3.2]{combettes2013variable}.
\begin{prop}\label{prop:quasifejerproperties}
Let $\left(u_{k}\right)_{k \in \NN}$ be a variable metric quasi-Fejér sequence relative to a nonempty, convex and closed set $C$ in $\RR^n$. These assertions hold.
\begin{enumerate}[(i)]
    \item \label{prop:quasifejerproperties1} $(\|u_k - u\|_{W_k})_{k \in \NN}$ converges for all $u \in C$.
    \item \label{prop:quasifejerproperties2} $\left(u_{k}\right)_{k \in \NN}$ is bounded.
\end{enumerate}
\end{prop}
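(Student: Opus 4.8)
The plan is to reduce both assertions to the behaviour of the single nonnegative scalar sequence $a_k := \|u_k - u\|_{W_k}^2$, for a fixed element $u \in C$ (which exists since $C$ is nonempty). By Definition \ref{def:variableMetric}, applied with $z = u$, there is a summable sequence $(\eta_k)_{k \in \NN} \in \ell_{+}^{1}(\NN)$ (common to all points of $C$) together with a summable sequence $(\varepsilon_k)_{k \in \NN} \in \ell_{+}^{1}(\NN)$ such that
\begin{align}
a_{k+1} \leq (1 + \eta_k)\, a_k + \varepsilon_k, \qquad \forall k \in \NN.
\label{eq:scalarrec}
\end{align}
The whole argument rests on the observation that \eqref{eq:scalarrec} is a deterministic Robbins--Siegmund (or Polyak) recursion, whose nonnegative solutions converge as soon as $\sum_k \eta_k < \infty$ and $\sum_k \varepsilon_k < \infty$.

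To establish convergence of $(a_k)_{k \in \NN}$, I would introduce the partial products $P_k := \prod_{j=0}^{k-1}(1 + \eta_j)$, with $P_0 := 1$. These form a nondecreasing sequence bounded above by $\exp\left(\sum_j \eta_j\right) < \infty$, hence $P_k \to P_\infty \in [1,\infty)$. Dividing \eqref{eq:scalarrec} by $P_{k+1}$ and setting $c_k := a_k / P_k \geq 0$ gives
\begin{align}
c_{k+1} \leq c_k + \frac{\varepsilon_k}{P_{k+1}} \leq c_k + \varepsilon_k,
\end{align}
the last step using $P_{k+1} \geq 1$. Consequently the sequence $d_k := c_k - \sum_{j=0}^{k-1}\varepsilon_j$ is nonincreasing and bounded below by $-\sum_j \varepsilon_j > -\infty$, so $(d_k)_{k \in \NN}$ converges; since $\sum_{j=0}^{k-1}\varepsilon_j$ also converges, $(c_k)_{k \in \NN}$ converges, and therefore $a_k = c_k P_k$ converges to some finite $\ell \geq 0$. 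Taking square roots yields $\|u_k - u\|_{W_k} = \sqrt{a_k} \to \sqrt{\ell}$, which is assertion \eqref{prop:quasifejerproperties1}.

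For assertion \eqref{prop:quasifejerproperties2}, I would keep the same fixed $u \in C$ and exploit the uniform lower bound $W_k \succeq \alpha I_n$. By definition of the weighted norm, $a_k = \langle W_k(u_k - u), u_k - u\rangle \geq \alpha \|u_k - u\|^2$. Since $(a_k)_{k \in \NN}$ converges it is in particular bounded, say $a_k \leq M$ for all $k$; hence $\|u_k - u\|^2 \leq M/\alpha$, and $(u_k)_{k \in \NN}$ is bounded.

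This proof is essentially routine and I do not anticipate a genuine obstacle; the only point deserving care is the quantifier structure in Definition \ref{def:variableMetric}. The sequence $(\eta_k)$ is selected first and is shared by all $z \in C$, whereas $(\varepsilon_k)$ is allowed to depend on the chosen $u$; this matches our needs exactly, since assertion \eqref{prop:quasifejerproperties1} is proved one $u$ at a time and assertion \eqref{prop:quasifejerproperties2} requires only a single fixed $u$. It is worth emphasizing that no hypothesis relating consecutive metrics $W_k$ and $W_{k+1}$ enters: only the quasi-Fej\'er inequality \eqref{eq:scalarrec} and the single lower bound $W_k \succeq \alpha I_n$ are used, which is precisely why the simplified Definition \ref{def:variableMetric} is sufficient for these two conclusions.
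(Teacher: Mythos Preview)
Your argument is correct: the Robbins--Siegmund-type reduction via the partial products $P_k=\prod_{j<k}(1+\eta_j)$ is the standard way to handle the recursion $a_{k+1}\le(1+\eta_k)a_k+\varepsilon_k$, and together with the uniform lower bound $W_k\succeq\alpha I_n$ it yields both assertions exactly as you describe. Note, however, that the paper does \emph{not} supply its own proof of this proposition; it is quoted verbatim as an established result from \cite[Proposition 3.2]{combettes2013variable}. So there is no ``paper's proof'' to compare against---you have simply filled in a proof that the authors chose to cite rather than reproduce, and your proof is essentially the classical one (indeed the same argument underlies the cited reference).
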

The next theorem allows to prove sequential convergence of variable metric quasi Fej\'er sequences. This result is again due to \cite[Theorem 3.3]{combettes2013variable}.
\begin{thm}\label{thm:cnvgevarmet}
     Let $\left(W_{k}\right)_{k \in \NN}$ be a sequence of symmetric matrices such that $W_k \succeq \alpha I_n, \forall k \in \NN$, for some $\alpha > 0$. We suppose that the sequence $\left(W_{k}\right)_{k \in \NN}$ converges to $W$. Let $\left(x_{k}\right)_{k \in \NN}$ be a variable metric quasi-Fej\'er sequence with respect to a closed target set $C \subset \RR^n$. Then $\left(x_{k}\right)_{k \in \NN}$ converges to a point in $C$ if and only if every cluster point of $\left(x_{k}\right)_{k \in \NN}$ is in $C$.
\end{thm}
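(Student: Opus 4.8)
The plan is to prove the stated equivalence by treating the two implications separately, the forward one being immediate and the reverse carrying all the content. If $(x_k)_{k \in \NN}$ converges to some $x^* \in C$, then $x^*$ is its unique cluster point and lies in $C$, so that direction is trivial. For the converse, I would assume that every cluster point of $(x_k)_{k \in \NN}$ lies in $C$. By Proposition \ref{prop:quasifejerproperties}(ii) the sequence is bounded, hence admits at least one cluster point by Bolzano--Weierstrass; to conclude convergence it then suffices to show the sequence has a \emph{unique} cluster point, since a bounded sequence with a single cluster point converges to it.

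The key reduction is to convert the convergence of the variable-metric distances into convergence of fixed-metric distances governed by the single limit matrix $W$. Fix $u \in C$. By Proposition \ref{prop:quasifejerproperties}(i) the sequence $\|x_k - u\|_{W_k}$ converges. Writing $\|x_k - u\|_{W_k}^2 - \|x_k - u\|_{W}^2 = \langle (W_k - W)(x_k - u), x_k - u\rangle$ and bounding the right-hand side by $\|W_k - W\|\,\|x_k - u\|^2$ in operator norm, the boundedness of $(x_k)_{k \in \NN}$ combined with $W_k \to W$ forces this difference to vanish as $k \to \infty$. Consequently $\|x_k - u\|_{W}^2$ converges, to the same limit, which I denote $\ell(u)$, and this holds for every $u \in C$.

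Now I would take two cluster points $u$ and $v$, both in $C$ by hypothesis, and show $u = v$. Expanding the fixed-metric norms and using symmetry of $W$ yields the identity $\|x_k - u\|_{W}^2 - \|x_k - v\|_{W}^2 = 2\langle W x_k, v - u\rangle + \langle Wu, u\rangle - \langle Wv, v\rangle$. The left-hand side converges to $\ell(u) - \ell(v)$ and the last two terms are constant, so $\langle W x_k, v - u\rangle$ converges along the whole sequence. Evaluating this common limit along a subsequence $x_{k_j} \to u$ gives $\langle Wu, v - u\rangle$, while along a subsequence $x_{m_j} \to v$ it gives $\langle Wv, v - u\rangle$; equating the two forces $\langle W(u - v), v - u\rangle = 0$, i.e. $\|u - v\|_W^2 = 0$. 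Since $W$ is the limit of matrices satisfying $W_k \succeq \alpha I_n$, we have $W \succeq \alpha I_n$, so $W$ is positive definite and $u = v$. The bounded sequence thus has a unique cluster point and converges to it, a point necessarily in $C$.

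I expect the delicate step to be the passage from the $W_k$-norm to the $W$-norm in the second paragraph: this is precisely where the two standing hypotheses are indispensable, namely convergence of the metrics $W_k \to W$ and boundedness of the iterates supplied by Proposition \ref{prop:quasifejerproperties}(ii). Without convergence of the metrics the limits $\ell(u)$ need not exist and the argument collapses. Once every distance is expressed through the single inner product induced by $W$, the rest is the classical Opial-type uniqueness-of-cluster-point argument.
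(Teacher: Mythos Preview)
The paper does not actually prove Theorem \ref{thm:cnvgevarmet}; it imports the result verbatim from \cite[Theorem 3.3]{combettes2013variable}. Your argument is correct and is in fact the standard proof one finds in that reference: pass from the $W_k$-norm to the limit $W$-norm using boundedness of the iterates and $W_k\to W$, then run the Opial-type uniqueness-of-cluster-point computation in the fixed inner product $\langle W\cdot,\cdot\rangle$, concluding via positive definiteness of $W\succeq\alpha I_n$. There is nothing to compare against in the paper itself.
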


\begin{rmk}\label{rmk:quasifejer}
If, for a variable metric quasi-Fej\'er sequence, $\left(W_{k}\right)_{k \in \NN}$ is constant and $\left(\eta_{k}\right)_{k \in \NN}$ is null for all $k \in \NN$, the sequence is simply called a quasi-Fej\'er sequence. Moreover if $\left(\varepsilon_{k}\right)_{k \in \NN}$ is null for all $k \in \NN$, it is called a Fej\'er monotone sequence, which provides a Lyapunov function. Of course, for these two special cases, the results stated above hold.
\end{rmk}
\subsection{Convergence of AdaGrad-Norm}
To prove convergence of sequences generated by AdaGrad-Norm,  we start with the following lemmas. 
\begin{lem}\label{lem:quadratic}
Let $a,b \geq 0$. If for $Z \geq 0$, $\displaystyle \frac{Z}{\sqrt{Z+a}} \leq b$, then $\displaystyle Z \leq b^2 + b\sqrt{a}$.
\end{lem}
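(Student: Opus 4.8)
The plan is to reduce the given fractional inequality to a standard quadratic inequality in $Z$, solve it, and then simplify the resulting bound using subadditivity of the square root. First I would note that both sides of $\frac{Z}{\sqrt{Z+a}} \leq b$ are nonnegative (here I would dispose of the trivial case $a=0$ together with $b=0$, where the hypothesis forces $Z=0$ and the conclusion reads $Z\le 0$; otherwise $\sqrt{Z+a}>0$ so the fraction is well defined). Squaring both sides and clearing the denominator gives $Z^2 \leq b^2(Z+a)$, i.e.
\begin{align}
    Z^2 - b^2 Z - b^2 a \leq 0.
\end{align}

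Next I would treat the left-hand side as a quadratic $P(Z) = Z^2 - b^2 Z - b^2 a$ with positive leading coefficient, whose larger root is $Z_+ = \frac{b^2 + \sqrt{b^4 + 4b^2 a}}{2} = \frac{b^2 + b\sqrt{b^2 + 4a}}{2}$. Since $Z \geq 0$ and $P(Z)\le 0$, the point $Z$ must lie between the two roots, so in particular $Z \leq Z_+$.

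Finally I would simplify this bound: by subadditivity of the square root, $\sqrt{b^2 + 4a} \leq \sqrt{b^2} + \sqrt{4a} = b + 2\sqrt{a}$, hence
\begin{align}
    Z \leq \frac{b^2 + b\left(b + 2\sqrt{a}\right)}{2} = b^2 + b\sqrt{a},
\end{align}
which is the claimed estimate. I do not anticipate a genuine obstacle here; the only points requiring a little care are justifying that squaring preserves the inequality (legitimate since both sides are nonnegative) and the edge cases where a denominator or $b$ vanishes, which are handled directly.
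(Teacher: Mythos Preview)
Your proof is correct and follows essentially the same route as the paper: square the inequality to obtain the quadratic $Z^2 - b^2 Z - b^2 a \le 0$, bound $Z$ by the larger root, and simplify via subadditivity of the square root. You are in fact slightly more careful than the paper about the degenerate case $a=b=0$ and about justifying the squaring step.
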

\begin{proof}
We have 
\begin{align*}
    Z^2 - b^2Z - b^2a \leq 0.
\end{align*}
For the equation of second order, $\Delta = b^4 + 4b^2a \geq 0$. We have two distinct real roots and the leading coefficient is positive, so for $Z$ to satisfy the above inequality, we should have
\begin{align*}
    Z &\leq \frac{b^2 + \sqrt{b^4 + 4b^2a}}{2} \\
      &\leq \frac{b^2 + \sqrt{b^4} + \sqrt{4b^2a}}{2} \\
      &= b^2 + b\sqrt{a}.
\end{align*}
\end{proof}

\begin{lem}
Under the hypothese of Theorem \ref{thm:main},
suppose that $(x_k)_{k \in \NN}$ is a sequence generated by Algorithm \ref{algo:adagradSimple}. Then we have that $\sum_{k=0}^{\infty}\|\nabla F(x_k)\|^2$ is finite.
\label{lem:lemmaconvergence}
\end{lem}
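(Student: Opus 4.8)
The plan is to combine the Descent Lemma (Lemma \ref{lem:desclemma}) with a telescoping argument and then invoke Lemma \ref{lem:quadratic}. Write $g_k = \nabla F(x_k)$ and $b_k = \sqrt{v_{k+1}+\delta}$, so that the update reads $x_{k+1} = x_k - b_k^{-1}g_k$. Applying the Descent Lemma to $F$ at the points $x_k$ and $x_{k+1}$ yields
\begin{align*}
    F(x_{k+1}) &\le F(x_k) - \frac{1}{b_k}\|g_k\|^2 + \frac{L}{2b_k^2}\|g_k\|^2 \\
    &= F(x_k) - \frac{\|g_k\|^2}{b_k}\left(1 - \frac{L}{2b_k}\right).
\end{align*}

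The first thing I would record is that $(v_k)_{k\in\NN}$ is nondecreasing, since $v_{k+1} - v_k = \|g_k\|^2 \ge 0$; hence $(b_k)_{k\in\NN}$ is nondecreasing as well. Now I split into two cases. If $(v_k)_{k\in\NN}$ is bounded, then $\sum_{k=0}^{\infty}\|g_k\|^2 = \lim_{k\to\infty} v_k - v_0 < \infty$ and we are done. Otherwise $v_k \to \infty$, so there is an index $k_0$ with $b_k \ge L$ for all $k\ge k_0$; for such $k$ the parenthesis above is at least $1/2$, giving $F(x_{k+1}) \le F(x_k) - \frac{1}{2}\|g_k\|^2/b_k$. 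Telescoping from $k_0$ to $N$ and using $F \ge F^*$ gives, with $C := F(x_{k_0}) - F^* \ge 0$,
\begin{align*}
    \sum_{k=k_0}^{N} \frac{\|g_k\|^2}{b_k} \le 2C \qquad \text{for all } N \ge k_0.
\end{align*}

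Finally, I would exploit the monotonicity of $(b_k)_{k\in\NN}$ once more: for $k_0 \le k \le N$ one has $b_k \le b_N$, hence
\begin{align*}
    v_{N+1} - v_{k_0} &= \sum_{k=k_0}^{N} \|g_k\|^2 = \sum_{k=k_0}^{N} b_k\,\frac{\|g_k\|^2}{b_k} \\
    &\le b_N \sum_{k=k_0}^{N}\frac{\|g_k\|^2}{b_k} \le 2C\,b_N = 2C\sqrt{v_{N+1}+\delta}.
\end{align*}
Applying Lemma \ref{lem:quadratic} with $Z = v_{N+1} - v_{k_0} \ge 0$, $a = v_{k_0} + \delta$ and $b = 2C$ bounds $v_{N+1}$ uniformly in $N$, contradicting $v_k \to \infty$. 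Therefore the first case must hold and $\sum_{k=0}^{\infty}\|\nabla F(x_k)\|^2 < \infty$. The only slightly delicate point, the \emph{main obstacle}, is the initial phase of iterations where $b_k$ may be below $L$ and $F$ need not decrease; this is harmless because monotonicity of $(b_k)_{k\in\NN}$ forces that phase to be finite (unless $(v_k)_{k\in\NN}$ stays bounded, in which case the claim is immediate). Everything else is a two-line computation, and convexity of $F$ is not even used here.
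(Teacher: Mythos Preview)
Your proof is correct and follows essentially the same route as the paper: Descent Lemma, a case split on whether $(v_k)$ stays bounded, telescoping once $b_k\ge L$, and then Lemma~\ref{lem:quadratic} to bound $Z=v_{N+1}-v_{k_0}$. The only cosmetic difference is that the paper treats the ``$\exists\,k_0$ with $\sqrt{v_{k_0}+\delta}\ge L$'' case directly (obtaining \eqref{eq:equationboundfinaladagrad}) rather than phrasing it as a contradiction to $v_k\to\infty$; the underlying computation is identical.
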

\begin{proof} \text{ This proof is inspired by the proof of Lemma 4.1 in \cite{ward2019adagrad}. }
Fix $x^* \in \RR^n$ such that $F(x^{*})=\inf _{x} F(x)>-\infty$. We split the proof into two cases.
\begin{itemize}
\item Suppose that there exits an index $k_{0} \in \NN$ such that $\sqrt{v_{k_0} + \delta} \geq L$. It follows using the descent Lemma \ref{lem:desclemma}, for any $j \geq 1$
\begin{align}
&F(x_{k_{0}+j})  \nonumber\\ 
\leq \,& F(x_{k_{0}+j-1}) + \langle \nabla F(x_{k_{0}+j-1}), x_{k_{0}+j} - x_{k_{0}+j-1}\rangle \nonumber\\
&+ \frac{L}{2} \|x_{k_{0}+j} - x_{k_{0}+j-1}\|^2 \nonumber \\
\label{eq:desclemma3} =&  F(x_{k_{0}+j-1})-\frac{1}{\sqrt{v_{k_{0}+j}+ \delta}}\left(1-\frac{L}{2 \sqrt{v_{k_{0}+j}+ \delta}}\right)\left\|\nabla F(x_{k_{0}+j-1})\right\|^{2} \\
\label{eq:qlcq1} \leq\,&  F(x_{k_{0}+j-1})-\frac{1}{2 \sqrt{v_{k_{0}+j}+ \delta}}\left\|\nabla F(x_{k_{0}+j-1})\right\|^{2} \\
\label{eq:qlcq2} \leq\,& F(x_{k_{0}})-\sum_{\ell=1}^{j} \frac{1}{2 \sqrt{v_{k_{0}+\ell }+ \delta}}\left\|\nabla F(x_{k_{0}-1+\ell})\right\|^{2}  \\
 \leq\,& F(x_{k_{0}})-\frac{1}{2  \sqrt{v_{k_{0}+j}+ \delta}} \sum_{\ell=1}^{j}\left\|\nabla F(x_{k_{0}+\ell-1})\right\|^{2}, \nonumber
\end{align}
where the transition from \eqref{eq:desclemma3} to \eqref{eq:qlcq1} is because $\sqrt{v_{k_0 + j} + \delta} \geq \sqrt{v_{k_0} + \delta} \geq L$, for all $j \geq 0$, and \eqref{eq:qlcq2} is a recursion.
Fix any $j \geq 1$, let $Z=\sum_{k=k_{0}}^{k_0+j-1}\left\|\nabla F(x_{k})\right\|^{2}$. It follows from the preceding inequality that
\begin{align*}
&2\left(F(x_{k_{0}})-F(x^{*})\right) \geq 2\left(F(x_{k_{0}})-F(x_{k_0+j})\right) \\
\geq\;& \frac{\sum_{k=k_{0}}^{k_0+j-1}\left\|\nabla F(x_{k})\right\|^{2}}{\sqrt{v_{k_0+j}+ \delta}} = \frac{Z}{\sqrt{Z+v_{k_{0}}+ \delta}}
\end{align*}
By Lemma \ref{lem:quadratic}, it follows
\begin{align}
&\sum_{k=k_{0}}^{k_0+j-1}\left\|\nabla F(x_{k})\right\|^{2} \nonumber\\ 
\leq\;& 4\left(F(x_{k_{0}})-F(x^{*})\right)^2 + 2(F(x_{k_{0}})-F(x^{*}))\sqrt{v_{k_{0}}+ \delta}.
\label{eq:equationboundfinaladagrad}
\end{align}
Since $j$ was arbitrary, one may take the limit $j \rightarrow \infty$ and we have
\[
\sum_{k=k_{0}}^{\infty}\left\|\nabla F(x_{k})\right\|^{2} < \infty.
\]
That means $\sum_{k=0}^{\infty}\left\|\nabla F(x_{k})\right\|^{2} < \infty$, which concludes the proof for this case.
\item On the contrary, we have that $\sqrt{v_{k}+\delta} < L$ for all $k \in \NN$, this means that $\forall k \in \NN$,
\begin{align*}
    \sum_{l=0}^{k} \|\nabla F(x_l)\|^2 &< L^2 - \delta.
\end{align*}
Letting $k$ goes to infinity gives
\begin{align*}
    \sum_{l=0}^{\infty} \|\nabla F(x_l)\|^2 &< L^2 - \delta < \infty,
\end{align*}
which is the desired result.
\end{itemize}
\end{proof}
We can now conclude the proof for AdaGrad-Norm.
\begin{proof}
Under the conditions of Theorem \ref{thm:main}, assume that $x_k$ is a sequence generated by AdaGrad-Norm.
Let $b_k = \sqrt{\delta + v_k}\geq \sqrt{\delta}$ for all $k \in \NN$, which is a non decreasing sequence. Let $x^* \in \arg \min F$ be arbitrary. By assumption $\arg \min F$ is nonempty and it is convex and closed since $F$ is convex and continuous. We have for all $k \in \NN$,
\begin{align*}
    &\|x_{k+1} - x^*\|^2\\
    =\;& \left\|x_k - x^* -\frac{1}{b_{k+1}}\nabla F(x_k)\right\|^2 \\
    =\;&\| x_k - x^* \|^2 + 2\left\langle \frac{1}{b_{k+1}}\nabla F(x_k), x^* - x_k \right\rangle + \frac{1}{b_{k+1}^2}\|\nabla F(x_k)\|^2.
\end{align*}
Thanks to the convexity of $F$, the above equality gives
\begin{align}\label{eq:forcvgcequasifejer}
    &\|x_{k+1} - x^*\|^2  \nonumber \\
    \leq \;&\| x_k - x^* \|^2 + \frac{2}{b_{k+1}} \left(F(x^*)-F(x_k)\right) + \frac{1}{b_{k+1}^2}\|\nabla F(x_k)\|^2 \nonumber \\
    \leq\;& \| x_k - x^* \|^2 + \frac{1}{\delta}\|\nabla F(x_k)\|^2.
\end{align}
By Lemma \ref{lem:lemmaconvergence}, $\|\nabla F(x_k)\|^2$ is summable. Hence $(x_k)_{k \in \NN}$ is a quasi-Fej\'er sequence relatively to $\arg \min F$. Proposition \ref{prop:quasifejerproperties} says that $(x_k)_{k \in \NN}$ is bounded. Thus it has an accumulation point. Then, thanks again to the Lemma \ref{lem:lemmaconvergence}, we have the set of accumulation points of $(x_k)_{k \in \NN}$ included in $\arg \min F$. So using Theorem \ref{thm:cnvgevarmet} and Remark \ref{rmk:quasifejer}, we conclude that $(x_k)_{k \in \NN}$ is convergent and that its limit is a global minimum of $F$.
\end{proof}
\subsection{Convergence of component-wise AdaGrad}
We now consider the case of AdaGrad in Algorithm \ref{algo:adagradSimplecoord}, taking into account the coordinatewise nature of the updates. The following corresponds to Lemma \ref{lem:lemmaconvergence} for this situation.
\begin{lem}
Under the hypothesis of Theorem \ref{thm:main},
suppose that $(x_k)_{k \in \NN}$ is a sequence generated by Algorithm \ref{algo:adagradSimplecoord}. We have that $\sum_{k=0}^{\infty}\|\nabla F(x_k)\|^2$ is finite.
\label{lem:lemmaconvergence2}
\end{lem}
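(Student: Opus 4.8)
The plan is to rerun the proof of Lemma \ref{lem:lemmaconvergence} coordinate by coordinate and then recombine, the new difficulty being that different coordinates may activate the descent mechanism at different times, or never. Write $b_{k,i} = \sqrt{v_{k,i} + \delta}$, so that $(b_{k,i})_{k\in\NN}$ is nondecreasing for each $i$ and $b_{k,i}\ge\sqrt{\delta}$. Applying the Descent Lemma \ref{lem:desclemma} along the update \eqref{eq:adagradSimplecoord}, together with $x_{k+1,i} - x_{k,i} = -\nabla_i F(x_k)/b_{k+1,i}$, gives for every $k\in\NN$
\begin{align*}
F(x_{k+1}) \leq F(x_k) - \sum_{i=1}^n \frac{1}{b_{k+1,i}}\left(1 - \frac{L}{2 b_{k+1,i}}\right)\big(\nabla_i F(x_k)\big)^2 .
\end{align*}

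First I would partition the coordinates. Let $I$ be the set of indices $i$ with $b_{k,i} < L$ for all $k\in\NN$; for such $i$ the recursion $v_{k+1,i}=v_{k,i}+(\nabla_iF(x_k))^2$ yields $\sum_{k=0}^\infty (\nabla_i F(x_k))^2 \le L^2 - \delta < \infty$ directly, exactly as in the second case of Lemma \ref{lem:lemmaconvergence}. For every $i\notin I$ there is a first index $k_{0,i}$ with $b_{k_{0,i},i}\ge L$; set $k_0 = \max_{i\notin I} k_{0,i}$, which is finite since there are finitely many coordinates (take $k_0 = 0$ if no such $i$ exists). For $k\ge k_0$ and $i\notin I$ we then have $1 - L/(2b_{k+1,i}) \ge 1/2$, while for $i\in I$ the factor $\tfrac{1}{b_{k+1,i}}(1 - \tfrac{L}{2b_{k+1,i}})$ may be negative but is bounded below by $-L/(2b_{k+1,i}^2)\ge -L/(2\delta)$. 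Substituting these two bounds into the displayed inequality and telescoping from $k_0$ to $k_0+j-1$, the summable ``bad'' contribution of the indices in $I$ is moved to the right-hand side, producing a finite constant
\begin{align*}
M := F(x_{k_0}) - F^* + \frac{L}{2\delta}\sum_{i\in I}\sum_{k=k_0}^\infty \big(\nabla_i F(x_k)\big)^2 < \infty
\end{align*}
such that $\displaystyle\sum_{k=k_0}^{k_0+j-1}\sum_{i\notin I}\frac{1}{2 b_{k+1,i}}\big(\nabla_i F(x_k)\big)^2 \le M$ for all $j\ge 1$ (using $F(x_{k_0+j})\ge F^*$).

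Next, fix $i\notin I$ and $j\ge 1$, and set $Z_i = \sum_{k=k_0}^{k_0+j-1}(\nabla_i F(x_k))^2$. Since all terms of the double sum are nonnegative and $b_{k+1,i}\le b_{k_0+j,i}$ whenever $k\le k_0+j-1$, we get $Z_i/(2 b_{k_0+j,i}) \le M$; and $b_{k_0+j,i}^2 = \delta + v_{k_0,i} + Z_i$, so $Z_i/\sqrt{Z_i + (\delta + v_{k_0,i})} \le 2M$. Lemma \ref{lem:quadratic} then yields $Z_i \le 4M^2 + 2M\sqrt{\delta + v_{k_0,i}}$, a bound independent of $j$, hence $\sum_{k=k_0}^\infty(\nabla_i F(x_k))^2 < \infty$ for each $i\notin I$. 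Adding the finitely many initial terms $\sum_{k=0}^{k_0-1}(\nabla_iF(x_k))^2$ and the already-finite sums over $i\in I$, and summing over the $n$ coordinates, we conclude $\sum_{k=0}^\infty \|\nabla F(x_k)\|^2 = \sum_{i=1}^n\sum_{k=0}^\infty (\nabla_i F(x_k))^2 < \infty$. The main obstacle I anticipate is precisely this bookkeeping around $I$: unlike the scalar case, one cannot cleanly split into a ``$b$ eventually large'' case and a ``$b$ always small'' case, because both regimes can occur simultaneously for different coordinates, and a coordinate staying below the threshold $L$ contributes a sign-indefinite term to the per-step descent inequality; the resolution is that this term is dominated by $\tfrac{L}{2\delta}(\nabla_i F(x_k))^2$, for which summability is free, so it can be absorbed into $M$.
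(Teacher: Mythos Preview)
Your proposal is correct and follows essentially the same route as the paper's proof: partition the coordinates into those whose accumulator eventually exceeds $L$ and those for which it never does, absorb the sign-indefinite contribution of the latter into a finite constant via the bound $-L/(2\delta)$, telescope the descent inequality from the common activation time $k_0$, and then apply Lemma~\ref{lem:quadratic} coordinate by coordinate. The only cosmetic difference is that your set $I$ is the complement of the paper's $I$ (you name the ``always below threshold'' indices, the paper names the ``eventually above threshold'' ones), and you package the constant as $M$ while the paper writes it as $F(x_{k_0})-F(x^*)+C$ with $C=\tfrac{L}{2\delta}n(L^2-\delta)$.
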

\begin{proof} \text{ }

Let $I = \{i \in [1, \cdots, n]: \exists k_i \in \NN, \sqrt{v_{k_i,i} + \delta} \geq L\}$. Consider for each $i$ the smallest possible $k_i$ in the definition of $I$ and set $k_0 = \max k_i$, $i \in I$. If $I$ is empty, we have, $\forall k \in \NN$ and $\forall i \in [1, \cdots, n]$,
\begin{align*}
    \sum_{l=0}^{k} (\nabla_i F(x_l))^2 &< L^2 - \delta.
\end{align*}
Making $k$ goes to infinity gives, $\forall i \in [1, \cdots, n]$,
\begin{align*}
    \sum_{l=0}^{\infty}(\nabla_i F(x_l))^2 < L^2 - \delta < \infty \text{ and } \sum_{l=0}^{\infty}\|\nabla F(x_l)\|^2 < \infty.
\end{align*}
So let us assume that $I$ is not empty.
By Descent Lemma \ref{lem:desclemma}, for $j \geq 1$,
\begin{align} %
&F(x_{k_{0}+j}) \nonumber \\
 \leq\;&  F(x_{k_{0}+j-1}) + \langle \nabla F(x_{k_{0}+j-1}), x_{k_{0}+j} - x_{k_{0}+j-1}\rangle \nonumber \\
 &+ \frac{L}{2} \|x_{k_{0}+j} - x_{k_{0}+j-1}\|^2 \nonumber \\
=\;&  F(x_{k_{0}+j-1}) + \sum_{i=1}^{n} \nabla_i F(x_{k_{0}+j-1})(x_{k_{0}+j} - x_{k_{0}+j-1})_i \nonumber \\
&+ \frac{L}{2} \sum_{i=1}^{n} (x_{k_{0}+j} - x_{k_{0}+j-1})_i^2 \nonumber \\
=\;&  F(x_{k_{0}+j-1}) - \sum_{i=1}^{n} \frac{1}{\sqrt{v_{k_{0}+j, i} + \delta}} \left(\nabla_i F(x_{k_{0}+j-1})\right)^2 \nonumber \\
&+ \frac{L}{2} \sum_{i=1}^{n} \frac{1}{v_{k_{0}+j, i} + \delta} \left(\nabla_i F(x_{k_{0}+j-1})\right)^2 \nonumber \\
=\;& F(x_{k_{0}+j-1})-\sum_{i \in I}\frac{1}{\sqrt{v_{k_{0}+j, i} + \delta}}\left(1-\frac{L}{2( \sqrt{v_{k_{0}+j,i}+ \delta})}\right)\left(\nabla_i F(x_{k_{0}+j-1})\right)^{2} \nonumber \\ &\quad - \sum_{i \notin I}\frac{1}{\sqrt{v_{k_{0}+j, i} + \delta}}\left(1-\frac{L}{2( \sqrt{v_{k_{0}+j,i}+ \delta})}\right)\left(\nabla_i F(x_{k_{0}+j-1})\right)^{2}.
\label{eq:descentlemmacmp}
\end{align}
We will take care of the two sums separately. For $i \notin I$, $\sqrt{v_{k,i} + \delta} < L$ for all $k \in \NN$. Therefore, for all $k \in \NN$ and $i \notin I$, 
\begin{align}
    \sum_{\ell=0}^{k} \left(\nabla_{i} F(x_{\ell -1})\right)^{2} = v_{k,i} < L^2-\delta
    \label{eq:sumGradSquareFiniteInI}
\end{align}
Furthermore, we have for all $k \in \NN$
\begin{align}
  & -\sum_{l = 0}^k \sum_{i \notin I}\frac{1}{\sqrt{v_{l, i} + \delta}}\left(1-\frac{L}{2 \sqrt{v_{l,i}+ \delta}}\right)\left(\nabla_i F(x_{l-1})\right)^{2}\nonumber \\
  \leq& \frac{1}{\sqrt{\delta}} \left(\frac{L}{2 \sqrt{\delta}} \right) \sum_{l =0}^k \sum_{i \notin I} \left(\nabla_i F(x_{l-1})\right)^{2} \nonumber\\
  =&\frac{L}{2\delta} \sum_{i \notin I} \sum_{l =0}^k \left(\nabla_i F(x_{l-1})\right)^{2} \nonumber\\
  = &\frac{L}{2\delta}n(L^2-\delta) = C < + \infty,
  \label{eq:temp0}
\end{align}
where we let $C = \frac{L}{2\delta}n(L^2-\delta)$. This takes care of the first case.

Consider now the first sum in \eqref{eq:descentlemmacmp}. Since for $i \in I$, $j \geq 1$, $\displaystyle 1-\frac{L}{2\sqrt{v_{k_{0}+j,i}+ \delta}} \geq 1/2$, we have 
\begin{align}
    \label{eq:temp1}
    -\frac{1}{\sqrt{v_{k_{0}+j, i} + \delta}}\left(1-\frac{L}{2( \sqrt{v_{k_{0}+j,i}+ \delta})}\right)&\leq -\frac{1}{2}\frac{1}{\sqrt{v_{k_{0}+j, i} + \delta}}.
\end{align}
By recurrence on \eqref{eq:descentlemmacmp}, using \eqref{eq:temp1} and \eqref{eq:temp0}, it follows that for all $j \geq 1$,
\begin{align*}
    F(x_{k_{0}+j}) \,\leq& F(x_{k_{0}})-\frac{1}{2}\sum_{\ell=1}^{j} \sum_{i \in I} \frac{1}{ \sqrt{v_{k_{0}+\ell,i} + \delta}}\left(\nabla_i F(x_{k_{0}+\ell-1})\right)^{2} + C
\end{align*}
That is equivalent to
\begin{align*}
2\left(F(x_{k_{0}}) - F(x_{k_{0}+j}) + C \right) & \geq \sum_{\ell=1}^{j} \sum_{i \in I} \frac{1}{ \sqrt{v_{k_{0}+\ell,i} + \delta}}\left(\nabla_i F(x_{k_{0}+\ell-1})\right)^{2} \nonumber \\
    &\geq \sum_{i \in I} \frac{1}{\sqrt{v_{k_{0}+j,i}+ \delta}} \sum_{\ell=1}^{j}\left(\nabla_i F(x_{k_{0}+\ell-1})\right)^{2}
\end{align*}
Fix $p \in I$, we deduce that for all $j \geq 1$,
\begin{align*}
    &\frac{1}{\sqrt{v_{k_{0}+j,p}+ \delta}} \sum_{\ell=1}^{j}\left(\nabla_p F(x_{k_{0}+\ell-1})\right)^{2} \\
    \leq\;& \sum_{i \in I} \frac{1}{\sqrt{v_{k_{0}+j,i}+ \delta}} \sum_{\ell=1}^{j}\left(\nabla_i F(x_{k_{0}+\ell-1})\right)^{2} \\
    \leq\;& 2\left(F(x_{k_{0}}) - F(x_{k_{0}+j}) +C \right) \\
    \leq\;& 2\left(F(x_{k_{0}}) - F(x^*) + C\right).
\end{align*}
Fix any $j \geq 1$, let $Z=\sum_{k=k_{0}}^{k_0+j-1}\left(\nabla_p F(x_{k})\right)^{2}$. We have $v_{k_{0}+j,p} = Z + v_{k_{0},p}$ and the previous inequality reads
\begin{align*}
    \frac{Z}{\sqrt{Z+v_{k_{0},p}+ \delta}} &\leq 2\left(F(x_{k_{0}}) - F(x^*) + C \right).
\end{align*}
By Lemma \ref{lem:quadratic}, we get
\begin{align}
    &\sum_{k=k_{0}}^{k_0+j-1}\left(\nabla_p F(x_{k})\right)^{2} \nonumber\\
    \leq\;& 4\left(F(x_{k_{0}})-F(x^{*})+ C\right)^2 + 2(F(x_{k_{0}})-F(x^{*})+C)\sqrt{v_{k_{0},p}+ \delta}.
\label{eq:equationboundfinaladagrad2}
\end{align}
We may let $j$ go to infinity and we obtain,
\begin{align*}
    \sum_{k=0}^{+\infty}\left(\nabla_p F(x_{k})\right)^{2} < \infty
\end{align*}
Since $p \in I$ was arbitrary, combining with \eqref{eq:sumGradSquareFiniteInI}, for all $i \in [1,\cdots,n]$
\begin{align*}
    \sum_{k=0}^{+\infty}\left(\nabla_i F(x_{k})\right)^{2} < \infty,
\end{align*}
and the result follows by summation
\begin{align*}
    \sum_{k=0}^{+\infty}\left\|\nabla F(x_{k})\right\|^{2} < \infty.
\end{align*}
\end{proof}
We conclude this section with the convergence proof for AdaGrad.
\begin{proof}
Under the conditions of Theorem \ref{thm:main}, assume that $x_k$ is a sequence generated by Algorithm \ref{algo:adagradSimplecoord}.
Let $b_{k,i} = \sqrt{\delta + v_{k,i}}$ for $k \in \NN$, $i \in [1,\cdots,n]$, all of them are increasing sequences. Fix any $x^* \in \arg\min F$, which is nonempty closed and convex since $F$ is convex, continuous and attains its minimum. Let $b_k = \left(b_{k,1}, \cdots, b_{k,n}\right) \in \RR^n$. We have for all $k \in \NN$ and $i = 1, \ldots, n$,
\begin{align*}
    &b_{k+1,i}\left(x_{k+1,i} - x_i^*\right)^2 \\
    =\;& b_{k+1,i}\left(x_{k,i} - x_i^* -\frac{1}{b_{k+1,i}}\nabla_i F(x_k)\right)^2 \\
    =\;& b_{k+1,i}\left( x_k - x^*\right)_i^2 + 2\left(\nabla_i F(x_k)\right)\left(x^* - x_k\right)_i + \frac{1}{b_{k+1,i}}\left(\nabla_i F(x_k)\right)^2.
\end{align*}
By summing over $i = 1,\ldots, n$, we get for all $k \in \NN$,
\begin{align*}
    &\sum_{i=1}^{n} b_{k+1,i}\left(x_{k+1} - x^*\right)_i^2 \\
    =\;& \sum_{i=1}^{n} b_{k+1,i}\left( x_k - x^*\right)_i^2 + 2\sum_{i=1}^{n} \left(\nabla_i F(x_k)\right)\left(x^* - x_k\right)_i\\
    &+ \sum_{i=1}^{n} \frac{1}{b_{k+1,i}}\left(\nabla_i F(x_k)\right)^2,
\end{align*}
and hence,
\begin{align*}
    &\|x_{k+1} - x^*\|_{B_{k+1}}^2 \\
    \leq\;& \sum_{i=1}^{n} b_{k+1,i}\left( x_k - x^*\right)_i^2 + 2\langle\nabla F(x_k), x^* - x_k\rangle + \frac{1}{\sqrt{\delta}}\|\nabla F(x_k)\|^2,
\end{align*}
where $B_{k+1}=\text{Diag}(b_{k+1}) \in \RR^{n \times n}$. Thanks to the convexity of $F$, the above inequality gives for all $k \in \NN$
\begin{align*}
    &\|x_{k+1} - x^*\|_{B_{k+1}}^2 \\
    \leq\;& \sum_{i=1}^{n} b_{k+1,i}\left( x_k - x^*\right)_i^2 + 2\left(F(x^*)-F(x_k)\right) + \frac{1}{\sqrt{\delta}}\|\nabla F(x_k)\|^2 \\
    \leq\;& \sum_{i=1}^{n} b_{k+1,i}\left( x_k - x^*\right)_i^2  + \frac{1}{\sqrt{\delta}}\|\nabla F(x_k)\|^2.
\end{align*}
It follows, for all $k \in \NN$,
\begin{align*}
    &\|x_{k+1} - x^*\|_{B_{k+1}}^2 - \frac{1}{\sqrt{\delta}}\|\nabla F(x_k)\|^2 \\
    \leq\;& \sum_{i=1}^{n} b_{k,i}\left( x_k - x^*\right)_i^2\frac{b_{k+1,i}}{b_{k,i}}  \\
    \leq\;& \max_{i \in [1,\cdots,n]}\frac{b_{k+1,i}}{b_{k,i}}\sum_{i=1}^{n} b_{k,i}\left( x_k - x^*\right)_i^2  \\
    =\;& \left(1+\left(\max_{i \in [1,\cdots,n]}\frac{b_{k+1,i}}{b_{k,i}}-1\right)\right)\sum_{i=1}^{n} b_{k,i}\left( x_k - x^*\right)_i^2 \\
    =\;& \left(1+\left(\max_{i \in [1,\cdots,n]}\frac{b_{k+1,i}}{b_{k,i}}-1\right)\right)\|x_{k} - x^*\|_{B_{k}}^2.
\end{align*}
Let $M \in \NN$, $M \geq 1$. For all $i \in [1, \cdots, n]$, we have, 
\begin{align*}
    \sum_{k=0}^{M-1}\frac{b_{k+1,i}}{b_{k,i}}-1 &= \sum_{k=0}^{M-1}\frac{b_{k+1,i}-b_{k,i}}{b_{k,i}} \\
    &\leq \sum_{k=0}^{M-1}\frac{b_{k+1,i}-b_{k,i}}{\sqrt{\delta}} \\
    & = \frac{1}{\sqrt{\delta}}b_{M,i} \\
    &< \infty,
\end{align*}
where the boundedness follows from Lemma \ref{lem:lemmaconvergence2}.
So $\forall i \in [1, \cdots, n]$, the sequence $(\frac{b_{k+1,i}}{b_{k,i}}-1)_{k \in \NN}$ is summable, and since $(b_{k,i})_{k \in \NN}$ is nondecreasing, it is also nonnegative. In particular the sequence $(\max_{i \in [1,\cdots,n]}\frac{b_{k+1,i}}{b_{k,i}}-1)_{k \in \NN}$ is summable and nonnegative.

Therefore $(x_k)_{k \in \NN}$ is variable metric quasi-Fej\'er  with target set $C = \arg\min F$, metric $W_k = B_k \succeq \frac{1}{\delta} I$, $\eta_k = \max_{i \in [1,\cdots,n]}\frac{b_{k+1,i}}{b_{k,i}}-1$ and $\epsilon_k = \frac{1}{\sqrt{\delta}}\|\nabla F(x_k)\|^2$, for all $k \in \NN$, using the notations of Definition \ref{def:variableMetric}. Note that $(\eta_k)_{k \in \NN}$ does not depend on the choice of $x^* \in C$ and is summable, and $(\epsilon_k)_{k \in \NN}$ is also summable by Lemma \ref{lem:lemmaconvergence2}, so that the definition applies. By Lemma \ref{lem:lemmaconvergence2}, $C$ contains all the cluster points and $(W_k)_{k \in \NN}$ converges. Thus Theorem \ref{thm:cnvgevarmet} allows us to conclude that $(x_k)_{k \in \NN}$ converges to a global minimum.
\end{proof}
\section{Discussion and future work}
Sequential convergence of AdaGrad in the smooth convex case constitutes a further adaptivity property for this algorithm. Fej\'er monotonicity plays an important role here as one would expect. It is interesting to remark that our analysis does not require any assumption on the objective $F$ beyond its Lipschitz gradient and the fact that it attains its minimum. Those are sufficient to ensure boundedness and convergence of any sequence. This is in contrast with analyses in more advanced, nonconvex, noisy settings where additional assumptions are required \cite{ward2019adagrad,defossez2020convergence}. Extensions of this analysis include the addition of noise or nonsmoothness in the convex case. It would also be interesting to see if the proposed approach allows to obtain better convergence bounds than the original regret analysis \cite{duchi2011adaptive,levy2018online}. 

A natural extension of the algorithm would take into account constrained versions of Problem \eqref{eq:mainProblem} by adding a projection step to Algorithm \eqref{eq:adagradSimplecoord}. Our analysis does not directly apply to such an algorithm because Lemma \ref{lem:lemmaconvergence2} would not be guaranteed to hold true, since in constraint optimization, the gradient of the objective may not vanish at the optimum. Therefore, the metric underlying the corresponding recursion would not stabilize and Theorem  \ref{thm:cnvgevarmet} would not apply. It is a topic of future research to describe a converging forward-backward, or projected, variant of the coordinatewise version of Adagrad. 

Finally, it would be interesting to relax the global Lipschicity assumption on the gradient to local Lipschicity, for example twice differentiability as proposed in \cite{malitsky2019adaptive}. We conjecture that our sequential convergence result still holds for convex objectives $F$ with full domain and locally Lipschitz gradient. The full domain assumption is crucial here since as we describe in next section , there exists a convex function $f \colon (-1/2,1/2) \mapsto \RR$, differentiable on its domain with locally Lipschitz gradient and a corresponding Adagrad sequence which diverges.

\subsection{On the importance of full domain assumption}
 \label{section_importance}
Do Algorithm \eqref{algo:adagradSimple} or \eqref{algo:adagradSimplecoord} converge when the global Lipschicity assumption is relaxed to local Lipschicity? We argue that full domain assumption would be essential for such a result. This section provides a counter example in dimension 1 of a convex function on $(-1/2,1/2)$ with locally Lipschitz derivative and an Adagrad sequence which is divergent. Note that in dimension 1, both algorithms are the same.

\begin{lem}
    \label{lem:interpolationDim1}
    Let $(x_k)_{k\in \NN}$ and $(z_k)_{k\in \NN}$ be two real sequence such that
    \begin{itemize}
        \item $x_1 < x_0$ and $z_1 \leq z_0$.
        \item $(x_{2k})_{k\in \NN}$ is strictly increasing and $(z_{2k})_{k\in \NN}$ is non decreasing.
        \item $(x_{2k+1})_{k\in \NN}$ is strictly decreasing and $(z_{2k+1})_{k\in \NN}$ is non increasing.
    \end{itemize}
    Then there exists a differentiable convex function $f \colon (\inf_{k \in \NN} x_k, \sup_{k \in \NN} x_k) \to \RR$ with locally Lipschitz gradient such that for all $k \in \NN$, $f'(x_k) = z_k$.
\end{lem}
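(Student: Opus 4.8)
The plan is to construct $f$ explicitly by prescribing its derivative. Since the monotonicity hypotheses arrange the points $x_k$ so that the even-indexed ones increase toward a right endpoint and the odd-indexed ones decrease toward a left endpoint, the set $\{x_k : k \in \NN\}$ has at most one accumulation point on each side and no accumulation inside the open interval $I := (\inf_k x_k, \sup_k x_k)$. The first step is therefore to relabel the abscissae in increasing order: let $\cdots < a_{-2} < a_{-1} < a_0 < a_1 < a_2 < \cdots$ be the strictly increasing enumeration of $\{x_k\}$, with the corresponding reordered derivative values $b_j := z_{k(j)}$ where $a_j = x_{k(j)}$. The key point to verify here is that this reordered value sequence $(b_j)$ is \emph{nondecreasing} in $j$: this is forced by the hypotheses, since $z_1 \le z_0$ together with the four monotonicity conditions on $(z_{2k})$ and $(z_{2k+1})$ means that as $x$ moves left the prescribed slope only decreases, and as $x$ moves right it only increases. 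Establishing this monotonicity of $(b_j)$ from the interleaved monotonicity of the $(z_k)$ is the one genuinely fiddly bookkeeping step, and I expect it to be the main obstacle — one has to check, for instance, that every odd point lies to the left of every even point is \emph{not} needed, but rather that between consecutive reordered points the slope is monotone, which follows by a short case analysis on parity of the indices involved.

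Given the increasing data $(a_j, b_j)$ with $b_j$ nondecreasing, the second step is to build a convex $C^1$ function interpolating it. I would take $g := f'$ to be a continuous, nondecreasing, piecewise-affine (or piecewise-linear-then-smoothed) function with $g(a_j) = b_j$: on each interval $[a_j, a_{j+1}]$ interpolate linearly, and extend beyond the range of the $a_j$'s by an affine piece of small positive slope (or a constant, matching the endpoint value) toward each end of $I$. Then set
\begin{align*}
    f(x) := f(a_0) + \int_{a_0}^{x} g(t)\, dt, \qquad x \in I,
\end{align*}
for an arbitrary choice of $f(a_0) \in \RR$. Since $g$ is continuous, $f$ is $C^1$ with $f' = g$; since $g$ is nondecreasing, $f$ is convex; and $f'(x_k) = g(x_k) = b_{j} = z_k$ by construction, so the interpolation property holds.

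The third and last step is the local Lipschitz regularity of $f' = g$. A piecewise-affine $g$ is only piecewise-$C^1$, so strictly it is locally Lipschitz already (it is globally Lipschitz on any compact subinterval of $I$, being piecewise affine with finitely many pieces there — note that only finitely many $a_j$ fall in any compact $K \subset I$, precisely because the $a_j$ can accumulate only at the endpoints of $I$). Hence no smoothing is actually required: local Lipschitzness of $g$ follows immediately from it being piecewise affine with breakpoints having no interior accumulation point. If one instead wants $f \in C^2$ one may mollify $g$ near the breakpoints on a small enough scale to preserve monotonicity and the values at the $a_j$, but this is optional. This completes the construction; the only nontrivial input is the reordering lemma of the first paragraph, everything after that is routine.
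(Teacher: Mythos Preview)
Your approach is correct and is essentially the same as the paper's: both build $f$ as a primitive of the piecewise-affine nondecreasing interpolant of the data $(x_k,z_k)$, then observe that this interpolant is locally Lipschitz because only finitely many breakpoints lie in any compact subinterval of $I$. The paper organizes the construction via nested functions $f_k$ on $[x_{2k+1},x_{2k}]$ using the natural odd/even split rather than a global relabelling, but the resulting $f$ is identical.

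Two minor clean-ups. First, the bookkeeping you flag as ``fiddly'' is in fact immediate: since $x_{2k+1}\le x_1 < x_0 \le x_{2j}$ for all $j,k\ge 0$, every odd-indexed abscissa lies strictly to the left of every even-indexed one, so the increasing relabelling is exactly $\ldots,x_5,x_3,x_1,x_0,x_2,x_4,\ldots$ and the monotonicity of $(b_j)$ follows directly from the three hypotheses with no case analysis. Second, there is no need to ``extend beyond the range of the $a_j$'s'': the $a_j$ accumulate precisely at the two endpoints of $I$, so the piecewise-affine interpolant is already defined on all of $I$.
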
   
\begin{proof}
    For each $k > 1$, define
    \begin{itemize}
        \item $g_k \colon [x_0, x_{2k}] \to \RR$ to be the affine interpolant such that $g_k(x_{2i}) = z_{2i}$ for all $i = 0,\ldots k$.
        \item $h_k \colon [x_{2k+1},x_0] \to \RR$ to be the affine interpolant such that $h_k(x_{2i+1}) = z_{2i+1}$ for all $i = 0,\ldots k$ and $h_k(x_0) = z_0$.
        \item $f_k \colon [x_{2k+1},x_{2k}] \to \RR$ be such that $f_k(x) = \int_{x_0}^x g_k(t) dt$ for $x \geq x_0$ and $f_k(x) = \int_{x_0}^x h_k(t) dt$ for $x < x_0$.
    \end{itemize}
    $f_k$ is continuously differentiable on $(x_{2k+1},x_{2k})$ and we have for all $x \in (x_{2k+1},x_{2k})$ that $f_k'(x) = g_k(x)$ if $x \geq x_0$ and $f'_k(x) = h_k(x)$ if $x < x_0$. Its derivative is non decreasing and hence $f_k$ is convex and its derivative is locally Lipschitz because it is piecewise affine.
    
    For $k' \geq k$, the function $f_{k'}$ defined on $[x_{2k'+1},x_{2k'}]$ agrees with $f_k$ on $[x_{2k+1},x_{2k}] \subset [x_{2k'+1},x_{2k'}]$. The desired function $f$ is defined for any $x \in (\inf_{k \in \NN} x_k, \sup_{k \in \NN} x_k)$ to be the equal to $f_k(x)$ for any $k$ such that $x \in (x_{2k+1},x_{2k})$, such a $k$ must exist by properties of infimum and suppremum. This is the desired function $f$.
\end{proof}
We need a last technical lemma.
\begin{lem}
    Let $0<b<1$, $\delta>0$ and $c>0$. Setting
    \begin{align*}
        a = \pm \frac{b}{\sqrt{1 - b^2}} \sqrt{\delta + c},
    \end{align*}
    we have
    \begin{align}
        \frac{a^2}{\delta + c + a^2} = b^2.
    \end{align}
    \label{lem:technicalCounterEx}
\end{lem}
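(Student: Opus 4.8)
The plan is to verify the identity by direct algebraic substitution, since Lemma~\ref{lem:technicalCounterEx} is a bookkeeping computation with no subtlety. First I would compute $a^2$ from the prescribed formula: squaring $a = \pm \frac{b}{\sqrt{1-b^2}}\sqrt{\delta+c}$ (the sign is irrelevant once squared) gives
\begin{align*}
    a^2 = \frac{b^2}{1-b^2}(\delta + c).
\end{align*}
Note this is well-defined because $0 < b < 1$ ensures $1 - b^2 > 0$, and nonnegative because $\delta + c > 0$; in particular $\delta + c + a^2 > 0$, so the left-hand side of the claimed identity makes sense.

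Next I would substitute into the denominator $\delta + c + a^2$ and put it over the common denominator $1-b^2$:
\begin{align*}
    \delta + c + a^2 = (\delta+c) + \frac{b^2}{1-b^2}(\delta+c) = \frac{(1-b^2) + b^2}{1-b^2}(\delta+c) = \frac{\delta + c}{1 - b^2}.
\end{align*}
Then the quotient collapses immediately:
\begin{align*}
    \frac{a^2}{\delta + c + a^2} = \frac{\frac{b^2}{1-b^2}(\delta+c)}{\frac{1}{1-b^2}(\delta+c)} = b^2,
\end{align*}
which is the desired conclusion.

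There is essentially no obstacle here: the only things worth remarking on are the hypotheses doing their job, namely $0 < b < 1$ guaranteeing the division by $1 - b^2$ is legitimate and that $b^2 < 1$, and $\delta + c > 0$ guaranteeing we may cancel the factor $\delta + c$ from numerator and denominator. The lemma is stated as a standalone computation precisely so that, in the counterexample construction, one can read off a value $a$ producing a prescribed AdaGrad multiplicative factor $b^2 = \frac{a^2}{\delta + c + a^2}$ on the squared distance, which is the mechanism forcing divergence. So the ``proof'' is just the three displayed lines above, and the main care is simply to record why the formula is well-posed.
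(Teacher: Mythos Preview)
Your proof is correct and follows essentially the same direct-substitution approach as the paper: both start from $a^2 = \frac{b^2}{1-b^2}(\delta+c)$ and rearrange; you compute the denominator $\delta+c+a^2 = \frac{\delta+c}{1-b^2}$ and divide, while the paper adds and subtracts $a^2\frac{b^2}{1-b^2}$ to isolate $\frac{a^2}{1-b^2}$, which amounts to the same manipulation. (One tangential remark: your closing sentence about the lemma's role slightly misstates its use---in the construction it is applied to realize a prescribed \emph{step length} $b = |x_{k+1}-x_k| = \frac{|a|}{\sqrt{\delta+c+a^2}}$, not a multiplicative factor on squared distance---but this does not affect the validity of the proof itself.)
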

\begin{proof}
    We have
    \begin{align*}
        a^2 &= \frac{b^2}{1 - b^2} (\delta + c)\\
        &=\frac{b^2}{1 - b^2} (\delta + c + a^2) - a^2 \frac{b^2}{1 - b^2}.
    \end{align*}
    We deduce that
        \begin{align*}
        \frac{a^2}{1 - b^2} &=\frac{b^2}{1 - b^2} (\delta + c + a^2),
    \end{align*}
    and the desired result follows
\end{proof}

Consider the following sequence
\begin{itemize}
    \item $x_{2k} = \frac{1}{2} - \frac{1}{2^{k+1}}$, for all $k \in \NN$.
    \item $x_{2k+1} = \frac{-1}{2} + \frac{1}{2^{k+2}}$, for all $k \in \NN$.
\end{itemize}
This sequence complies with the hypotheses of Lemma \ref{lem:interpolationDim1}.  Furthermore, $|x_k - x_{k+1}|$ is an increasing sequence, strictly smaller than $1$. Fix $\delta = 15$ and $z_0 = 1$. This ensures that $x_0 - z_0 / \sqrt{\delta + z_0^2} = x_1$. This relation will be preserved by recursion, setting for all $k \in \NN$
\begin{itemize}
    \item $z_{2k+1} = - \frac{|x_{2k+2} - x_{2k+1}|}{\sqrt{1 - |x_{2k+2} - x_{2k+1}|^2}}  \sqrt{\delta + \sum_{i=0}^{2k} z_i^2}$.
    \item $z_{2k+2} =   \frac{|x_{2k+3} - x_{2k+2}|}{\sqrt{1 - |x_{2k+3} - x_{2k+2}|^2}}  \sqrt{\delta + \sum_{i=0}^{2k+1} z_i^2}$.
\end{itemize}
We have for all $k \in \NN$,
\begin{align*}
    \frac{|z_{2k+1}|}{\sqrt{\delta + \sum_{i=0}^{2k+1}z_i^2}} = |x_{2k+2} - x_{2k+1}|,
\end{align*}
by using Lemma \ref{lem:technicalCounterEx} with $a = |z_{2k+1}|$, $b =  |x_{2k+2} - x_{2k+1}|$, $c =\sum_{i=0}^{2k} z_i^2$ and the chosen value of $\delta$. Similarly, for all $k \in \NN$, by using Lemma \ref{lem:technicalCounterEx} with $a = |z_{2k+2}|$, $b =  |x_{2k+3} - x_{2k+2}|$, $c =\sum_{i=0}^{2k+1} z_i^2$, 
\begin{align*}
    \frac{|z_{2k+2}|}{\sqrt{\delta + \sum_{i=0}^{2k+2} z_i^2}} = |x_{2k+3} - x_{2k+2}|,
\end{align*}
In summary, we have for all $k \in \NN$,
\begin{align}
    \label{eq:tempInterpol1}
    \frac{|z_{k}|}{\sqrt{\delta + \sum_{i=0}^{k}z_i^2}} = |x_{k+1} - x_{k}|.
\end{align}
Carefully studying the signs of each sequence, we have for all $k \in \NN$
\begin{align*}
    \frac{z_{2k}}{\sqrt{\delta + \sum_{i=0}^{2k}} z_i^2} = x_{2k} - x_{2k+1} & \geq 0\\
    \frac{z_{2k+1}}{\sqrt{\delta +\sum_{i=0}^{2k+1}} z_i^2} = x_{2k+1} - x_{2k+2} & \leq 0
\end{align*}
and finally for all $k \in \NN$
\begin{align}
    \label{eq:tempInterpol2}
    x_k - \frac{z_{k}}{\sqrt{\delta + \sum_{i=0}^{k}} z_i^2} = x_{k+1}, 
\end{align}
so that the two sequences comply with recursion \eqref{eq:adagradSimple} with $z_k$ in place of derivatives. Let us check that the sequence $(z_k)_{k \in \NN}$ complies with Lemma \ref{lem:interpolationDim1}, which would conclude the proof. Given the alternating sign pattern of the sequence, it is sufficient to show that $|z_k|_{k \in \NN}$ is non decreasing. From \eqref{eq:tempInterpol1} and the fact that $|x_{k+1} - x_{k}|$ is increasing with $k$, we have for all $k \in \NN$
\begin{align*}
    \frac{|z_{k}|}{|z_{k+1}|} \leq \frac{\sqrt{\delta + \sum_{i=0}^{k}z_i^2}}{\sqrt{\delta + \sum_{i=0}^{k+1}z_i^2}} < 1,
\end{align*}
because for all $k \in \NN$, $z_{k+1}^2 > 0$. The proof is complete.

\section*{Acknowledgements}
Most of this work took place during the first author master internship at IRIT. His PhD at the University of Genova, is supported by the ITN-ETN project TraDE-OPT funded by the European Union’s Horizon 2020 research and innovation programme under the Marie Skłodowska-Curie grant agreement No 861137.

The second author would like to acknowledge the support of ANR-3IA Artificial and Natural Intelligence Toulouse Institute, Air Force Office of Scientific Research, Air Force Material Command, USAF, under grant numbers FA9550-19-1-7026, FA9550-18-1-0226, and ANR MaSDOL - 19-CE23-0017-01.

We warmly thank the anonymous referee for careful reading and relevant suggestions which improved the quality of the manuscript.
\bibliographystyle{abbrv} 
\bibliography{convex_case}

\begin{thebibliography}{10}

\bibitem{bach2012optimization}
F.~Bach, R.~Jenatton, J.~Mairal, and G.~Obozinski.
\newblock Optimization with sparsity-inducing penalties.
\newblock {\em Foundations and Trends{\textregistered} in Machine Learning},
  4(1):1--106, 2012.

\bibitem{bach2019universal}
F.~Bach and K.~Y. Levy.
\newblock A universal algorithm for variational inequalities adaptive to
  smoothness and noise.
\newblock {\em arXiv preprint arXiv:1902.01637}, 2019.

\bibitem{barakat2018convergence}
A.~Barakat and P.~Bianchi.
\newblock Convergence and dynamical behavior of the adam algorithm for non
  convex stochastic optimization.
\newblock {\em arXiv preprint arXiv:1810.02263}, 2018.

\bibitem{barakat2020convergence}
A.~Barakat and P.~Bianchi.
\newblock Convergence rates of a momentum algorithm with bounded adaptive step
  size for nonconvex optimization.
\newblock In {\em Asian Conference on Machine Learning}, pages 225--240. PMLR,
  2020.

\bibitem{bottou2018optimization}
L.~Bottou, F.~E. Curtis, and J.~Nocedal.
\newblock Optimization methods for large-scale machine learning.
\newblock {\em Siam Review}, 60(2):223--311, 2018.

\bibitem{chen2018sadagrad}
Z.~Chen, Y.~Xu, E.~Chen, and T.~Yang.
\newblock Sadagrad: Strongly adaptive stochastic gradient methods.
\newblock In {\em International Conference on Machine Learning}, pages
  913--921, 2018.

\bibitem{combettes2000fejer}
P.~L. Combettes.
\newblock {\em Fej{\'e}r monotonicity in convex optimization}, pages
  1016--1024.
\newblock Springer US, 2001.

\bibitem{combettes2001quasi}
P.~L. Combettes.
\newblock Quasi-fej{\'e}rian analysis of some optimization algorithms.
\newblock In {\em Studies in Computational Mathematics}, volume~8, pages
  115--152. Elsevier, 2001.

\bibitem{combettes2011proximal}
P.~L. Combettes and J.-C. Pesquet.
\newblock Proximal splitting methods in signal processing.
\newblock In {\em Fixed-point algorithms for inverse problems in science and
  engineering}, pages 185--212. Springer, 2011.

\bibitem{combettes2013variable}
P.~L. Combettes and B.~C. V{\~u}.
\newblock Variable metric quasi-fej{\'e}r monotonicity.
\newblock {\em Nonlinear Analysis: Theory, Methods \& Applications}, 78:17--31,
  2013.

\bibitem{defossez2020convergence}
A.~D{\'e}fossez, L.~Bottou, F.~Bach, and N.~Usunier.
\newblock On the convergence of adam and adagrad.
\newblock {\em arXiv preprint arXiv:2003.02395}, 2020.

\bibitem{duchi2011adaptive}
J.~Duchi, E.~Hazan, and Y.~Singer.
\newblock Adaptive subgradient methods for online learning and stochastic
  optimization.
\newblock {\em Journal of machine learning research}, 12(7), 2011.

\bibitem{goodfellow2016deep}
I.~Goodfellow, Y.~Bengio, A.~Courville, and Y.~Bengio.
\newblock {\em Deep learning}, volume~1.
\newblock MIT press Cambridge, 2016.

\bibitem{DBLP:journals/corr/KingmaB14}
D.~P. Kingma and J.~Ba.
\newblock Adam: {A} method for stochastic optimization.
\newblock In Y.~Bengio and Y.~LeCun, editors, {\em 3rd International Conference
  on Learning Representations, {ICLR} 2015, San Diego, CA, USA, May 7-9, 2015,
  Conference Track Proceedings}, 2015.

\bibitem{levy2018online}
K.~Y. Levy, A.~Yurtsever, and V.~Cevher.
\newblock Online adaptive methods, universality and acceleration.
\newblock In {\em Advances in Neural Information Processing Systems}, pages
  6500--6509, 2018.

\bibitem{li2019convergence}
X.~Li and F.~Orabona.
\newblock On the convergence of stochastic gradient descent with adaptive
  stepsizes.
\newblock In {\em The 22nd International Conference on Artificial Intelligence
  and Statistics}, pages 983--992. PMLR, 2019.

\bibitem{malitsky2019adaptive}
Y.~Malitsky and K.~Mishchenko.
\newblock Adaptive gradient descent without descent.
\newblock {\em arXiv preprint arXiv:1910.09529}, 2019.

\bibitem{mcmahan2010adaptive}
H.~B. McMahan and M.~Streeter.
\newblock Adaptive bound optimization for online convex optimization.
\newblock {\em arXiv preprint arXiv:1002.4908}, 2010.

\bibitem{nesterov1998introductory}
Y.~Nesterov.
\newblock Introductory lectures on convex programming volume i: Basic course.
\newblock {\em Lecture notes}, 3(4):5, 1998.

\bibitem{ogaltsov2019adaptive}
A.~Ogaltsov, D.~Dvinskikh, P.~Dvurechensky, A.~Gasnikov, and V.~Spokoiny.
\newblock Adaptive gradient descent for convex and non-convex stochastic
  optimization.
\newblock {\em arXiv preprint arXiv:1911.08380}, 2019.

\bibitem{ward2019adagrad}
R.~Ward, X.~Wu, and L.~Bottou.
\newblock Adagrad stepsizes: Sharp convergence over nonconvex landscapes.
\newblock In {\em International Conference on Machine Learning}, pages
  6677--6686, 2019.

\bibitem{wilson2017marginal}
A.~C. Wilson, R.~Roelofs, M.~Stern, N.~Srebro, and B.~Recht.
\newblock The marginal value of adaptive gradient methods in machine learning.
\newblock In {\em Advances in neural information processing systems}, pages
  4148--4158, 2017.

\bibitem{xie2020linear}
Y.~Xie, X.~Wu, and R.~Ward.
\newblock Linear convergence of adaptive stochastic gradient descent.
\newblock In {\em International Conference on Artificial Intelligence and
  Statistics}, pages 1475--1485. PMLR, 2020.

\end{thebibliography}

\end{document}